\protected\def\ignorethis#1\endignorethis{}
\let\endignorethis\relax
\newtheorem{thm}{Theorem}[section]
\newtheorem{lem}{Lemma}[section]
\newtheorem{prop}{Proposition}[section]
\newtheorem{cor}{Corollary}[section]
\theoremstyle{definition}
\newtheorem{rem}{Remark}[section]
\def\R{\mathbb{R}}
\def\N{\mathbb{N}}
\def\Z{\mathbb{Z}}
\def\T{\mathbb{T}^2}
\def\C{\mathbb{C}}
\def\W8{W_{\infty}}
\def\d{\delta}
\def\D{\Delta}
\def\d*{\delta_*}
\def\Em{\mathbb{E}_{\mu_{L,\gamma}}}
\def\w{\omega}
\def\supp{{\rm supp}}
\def\E{ E} 
\def\phi{\varphi}
\def\P{\mathbb{P}}
\begin{document}

\title{Invariant measures for the non-periodic two-dimensional Euler equation}

\author[Ana Bela Cruzeiro and Alexandra Symeonides]{Ana Bela Cruzeiro (1) and Alexandra Symeonides (2)}
 \address{
(1) GFMUL and Dep. de Matem\'atica do Instituto Superior T\'ecnico, Univ. de Lisboa \hfill\break\indent 
		Av. Rovisco Pais, 1049-001 Lisboa, Portugal \hfill\break\indent
			}
 \email{abcruz@math.tecnico.ulisboa.pt}

\address{
 (2)  GFMUL and Dep. de Matem\'atica da Faculdade de Ci\^encias, Univ. de Lisboa\hfill\break\indent 
		Campo Grande, 1749-016 Lisboa, Portugal \hfill\break\indent
			}
 \email{asymeonides@fc.ul.pt}
 
 \maketitle
 
\begin{abstract}\noindent
We construct Gaussian invariant measures for the two-dimensional Euler equation on the plane. We show the existence of solution with initial conditions in the support of the measures, namely $H^\beta_{loc}(\R^2)$ with $\beta<-1$. Uniqueness and continuity of the velocity flow are proved. 
\end{abstract}

\tableofcontents

\section{Introduction}

Euler equation describes the time evolution of an incompressible non-viscous fluid with constant density. This fundamental equation has been and still is intensively studied. Among the numerous references on the Euler equation, we cite the books \cite{AK, MB, MP}. It is known  that solutions do not blow up starting from smooth data with finite kinetic energy (T. Kato (1967) \cite{Kato},  C. Bardos (1972) \cite{Bard} among others). Local existence of smooth solutions dates back from Lichtenstein (1925). In two dimensions, for bounded domains and when the initial vorticity is bounded, existence, uniqueness and global regularity of solutions was shown (V.I. Yudovich, 1963 \cite{Y}); these results were extended, in the framework of weak solutions, to the case where the initial vorticity belongs to $L^p$, with $p>1$ and even for $p=1$, when the vorticity is some finite measure.

A more geometric approach,  identifying the solutions of the Euler equation with  velocities of geodesics in a space of diffeomorphisms of the underlying state space, was initiated by V. Arnold (1966) \cite{Arn}. It allowed to show existence of local solutions in some Sobolev spaces (D. G. Ebin and J. Marsden, 1970 \cite{EM}).

Much less is known about irregular solutions of the Euler equation. This paper is devoted to a class of such solutions.

In statistical approaches to hydrodynamics, discussed in the physics literature on turbulence, one considers the evolution of probability densities instead of  pointwise solutions. A major subject of interest is the search for invariant measures. In particular such measures are important because they can be used to prove the existence and study the properties of Euler flows defined almost-everywhere with respect to them.

In this paper we extend the work \cite{AC} in two dimensions to the non-periodic setting.  We prove the existence of  invariant probability measures for the Euler flow and show the existence of these flows, for all times, living in the support of the invariant measures. Those are spaces of very low regularity, namely Sobolev spaces of negative order.

In Section \ref{2dEuler}, we recall the Euler equations in the periodic setting and we fix the notation. For each parameter $\gamma>0$, we denote by $\mu_{L,\gamma}$ the invariant measure for the two-dimensional Euler flow on $[0,L]^2$. These measures $\mu_{L,\gamma}$ were previously constructed in \cite{AC}. In Section \ref{inv_meas}, we show the weak convergence of $\mu_{L,\gamma}$ to some $\mu_\gamma$ in $H^\beta_{loc}(\R^2)$ for $\beta<1$ when the period $L$ tends to infinity. We follow a similar argument used in \cite{ASSuzz} for the Klein-Gordon equation in dimension one. Here we also show that $H^\beta_{loc}(\R^2)$ for $\beta<1$ is the support of $\mu_\gamma$. Finally, in Section \ref{muFlow} we study the $L^p_{\mu_\gamma}$-regularity of the vector field, $B$, and the existence of a unique and globally defined Euler flow, $U$, under which the probability measures $\mu_\gamma$ are invariant. We proceed as follow: following the arguments presented in \cite{AC} we prove the existence of a globally defined pointwise stochastic flow $\tilde U$ for initial data in the support of $\mu_\gamma$. However, as a by product of the uniqueness, we can conclude that this flow is in fact deterministic and we call it $U$. Indeed the proof of uniqueness relies on a result from \cite{AF}, which, in particular, implies that the laws of the pointwise Euler flows are Dirac masses on the trajectories. We conclude this section by proving the continuity of the flow $U$.

\section{2D Euler equations}\label{2dEuler}

Consider the incompressible non-viscous Euler equations on $\R^2$
\begin{equation}\label{Euler}
\frac{\partial u}{\partial t} + (u \cdot \nabla)u =-\nabla p, \qquad \nabla\cdot u=0
\end{equation}
where $u:\R\times\R^2\to\R^2$ denotes the time dependent velocity field and $p:\R\times\R^2\to \R$ denotes the pressure. The first equation is Newton's second law (the acceleration is proportional to the pressure) and the second equation is the incompressibility condition.

\begin{thm}
The time dependent vector field $u$ is a smooth solution of \eqref{Euler} if and only if there exists a smooth (real) function $\varphi$ (stream function) such that $u=\nabla^\perp \varphi$ and $\varphi$ is a solution of the equation
\begin{equation}\label{vorEuler}
\frac{\partial \D\varphi}{\partial t} = -(\nabla^\perp\varphi\cdot \nabla)\D\varphi.
\end{equation}
\end{thm}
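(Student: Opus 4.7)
The plan is to exploit two standard facts about the simply connected domain $\R^2$: every smooth divergence-free vector field admits a smooth stream function, and every smooth curl-free vector field is a gradient. The equivalence of \eqref{Euler} and \eqref{vorEuler} will then follow by taking the (scalar) curl of both sides of the momentum equation and using $\div u = 0$ to simplify the nonlinear term.

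For the direct implication, I first use $\div u = 0$ on $\R^2$ together with Poincar\'e's lemma (applied to the closed $1$-form $-u_2\,dx_1 + u_1\,dx_2$) to produce a smooth $\phi$ with $u = \nabla^\perp\phi$. The vorticity is then $\w := \partial_1 u_2 - \partial_2 u_1 = \D\phi$. Taking the curl of \eqref{Euler}, the pressure term drops out and the time derivative yields $\partial_t \D\phi$. The nonlinear contribution is handled by the $2$D identity $\operatorname{curl}((u\cdot\nabla)u) = u\cdot\nabla\w + \w\,\div u$, which reduces to $u\cdot\nabla\w = (\nabla^\perp\phi\cdot\nabla)\D\phi$ once $\div u = 0$ is imposed. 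Combining these gives \eqref{vorEuler}.

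For the converse, I set $u := \nabla^\perp\phi$, so that $\div u = 0$ is automatic, and define the residual $F := \partial_t u + (u\cdot\nabla)u$. The same curl identity together with \eqref{vorEuler} yields $\operatorname{curl} F = \partial_t \D\phi + (\nabla^\perp\phi\cdot\nabla)\D\phi = 0$. Since $F$ is smooth and curl-free on the simply connected $\R^2$, Poincar\'e's lemma furnishes a smooth $p$ with $F = -\nabla p$, which is exactly \eqref{Euler}.

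The only real computation is the curl identity $\operatorname{curl}((u\cdot\nabla)u) = u\cdot\nabla\w + \w\,\div u$; expanding components, the terms that do not assemble into $u\cdot\nabla\w$ collapse to $(\partial_1 u_2 - \partial_2 u_1)(\partial_1 u_1 + \partial_2 u_2) = \w\,\div u$. I expect this algebraic step to be the main, though routine, obstacle; the two Poincar\'e-lemma arguments and the smoothness of $\phi$ and $p$ are automatic on $\R^2$ given the smoothness of $u$, respectively of $\phi$.
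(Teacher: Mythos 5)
Your argument is correct and is the standard derivation of the vorticity--stream-function formulation (take the scalar curl of the momentum equation, use $\operatorname{curl}((u\cdot\nabla)u)=u\cdot\nabla\w+\w\,\div u$ with $\div u=0$, and invoke Poincar\'e's lemma twice on the simply connected $\R^2$); the paper itself offers no argument and simply cites \cite{AHKD}, so this is exactly the expected route. The only blemish is a sign: with the paper's convention $\nabla^\perp\phi=(-\partial_2\phi,\partial_1\phi)$, the closed $1$-form whose potential is $\phi$ is $u_2\,dx_1-u_1\,dx_2$ rather than $-u_2\,dx_1+u_1\,dx_2$ (equivalently, replace your $\phi$ by $-\phi$), after which $\w=\D\phi$ and the rest of your computation goes through verbatim.
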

\begin{proof}
We refer to \cite{AHKD}.
\end{proof}

Here $\nabla^\perp \phi=(-\partial_2 \phi, \partial_1 \phi)$ where $\partial_1,\partial_2$ denote respectively the partial derivative with respect to the first and second variable. The two problems, \eqref{Euler} and \eqref{vorEuler}, are equivalent; below we consider \eqref{vorEuler}.


\subsection{Periodic case}

We recall here the most relevant results from \cite{AC} about the periodic case. On the space $\T\times\R$, where $\T\simeq [0,L]^2$ such that $L>0$ denotes the period, consider equation \eqref{vorEuler} with periodic boundary condition
$$
\varphi(0,y,t)=\varphi(L,y,t) \mbox{ and } \varphi(x,0,t)=\varphi(x,L,t), \qquad \forall ~(x,y)\in\T.
$$
In \cite{AC} is considered the case $L = 2\pi$, but the analysis for general $L > 0$ is identical if we simply re-scale.

The energy and the enstrophy, namely $E(u)=\frac{1}{2}\int_{\T} |u|^2dx$ and $S(u)=\frac{1}{2}\int_{\T} |\mbox{curl} u|^2dx$, are conserved by the Euler velocity. In terms of the stream function $\phi$ we have
$$
E(\phi)=-\frac{1}{2}\int_{\T} \phi \D\phi dx
$$
and
$$
S(\phi)=\frac{1}{2}\int_{\T} |\D \phi|^2dx.
$$

We denote by $\{e^L_k\}_{k \in\Z^2}$ the following  orthonormal basis of $L^2(\T)$, 
$$
e^L_k=\frac{1}{L}e^{i\frac{2\pi}{L}k\cdot x},  \quad \forall ~k \in\Z^2.
$$
For all $u\in L^2(\T)$ we have 
$$
u(x,t)=\sum_{k>0} u_k^L(t)e_k^L(x),
$$
and we can identify the Sobolev spaces $H^{\beta} (\T)$ defined by
$$
H^{\beta}(\T):=\{ u:\T \to \R : ~(I-\Delta)^{\beta/2}u \in L^2(\T)\}
$$
with
\begin{equation}
H^{\beta}:=\left\{u=\sum_{k>0} u_k^L e_k^L : \sum_{k>0} \left(\frac{2\pi k}{L}\right)^{2\beta} |u_k^L|^2 < + \infty \right\}.
\end{equation}
We say that $k=(k_1,k_2)\in\Z^2$ is positive if $k_1>0$ or $k_1=0$ and $k_2>0$ and by $k^2$ we denote the inner product $k\cdot k =k_1^2+k_2^2$.

For all $\beta$, $H^\beta$ is a Hilbert space with inner product given by
$$
<u,v>_{\beta}:=\sum_{k>0} \left(\frac{2\pi k}{L}\right)^{2\beta} u_k^L \bar v_k^L.
$$

By means of the basis expansion on $L^2([0,L]^2)$, for $\phi^L(x,t)= \sum_{k>0} \phi_k^L(t)e_k^L(x)$, the equations reduce to an infinite dimensional system of first order ODEs
\begin{equation}
\frac{d}{dt}\phi_k^L(t)=B_k^L(\phi^L), \qquad \forall ~k\in \Z^2
\end{equation}
where
\begin{equation}\label{vectfield1}
B_L(\phi^L):=\sum_{k>0} B_k^L(\phi^L) e_k^L(x)
\end{equation}
and 
\begin{equation}\label{vectfield2}
B_k^L(\phi^L)=\frac{1}{L}\left( \frac{2\pi}{L} \right)^2 \sum_{\substack{h>0 \\ h\neq k}}\left[\frac{(h^\perp\cdot k)(k\cdot h)}{k^2} - \frac{h^\perp\cdot k}{2}\right]\phi_h^L\phi_{k-h}^L,
\end{equation}
where $h^\perp=(-h_2,h_1)$. We write $B_k^L(\phi^L)= \sum_h \alpha_{h,k}^L\phi_h^L\phi_{k-h}^L$, with 
\begin{equation}
\alpha_{h,k}^L=\frac{1}{L}\left( \frac{2\pi}{L} \right)^2 \left[\frac{(h^\perp\cdot k)(k\cdot h)}{k^2} - \frac{h^\perp\cdot k}{2}\right].
\end{equation}


\subsection{Notations}\label{notations}

Let us consider some relevant function spaces that will be used below. For all $\beta \in \R$ we define the local Sobolev spaces $ H_{loc}^{\beta}(\R^2)$ by
$$
H_{loc}^{\beta}(\R^2):=\{ u : \forall K\subset \R^2 \mbox{ compact}, ~ (I-\Delta)^{\beta/2}u \in L^2(K)\}.
$$
For negative or non-integer values of $\beta$, the operator $(I-\Delta)^{\beta/2}$ is considered as a pseudo-differential operator.
We may assume that the compact sets $K$ are of the type $K=[0,L]\times[0,L]$ for $L\in\N^*$. The spaces $H_{loc}^{\beta}(\R^2)$ are not normed spaces, however it is possible to equip them with the topology induced by the distances $\tilde d_{\beta,2}$ defined by 
\begin{equation}\label{tilde_d_beta,2}
\tilde d_{\beta,2}(u,v):=\sum_{L\in\N^*}2^{-L}\frac{ \| (I-\Delta)^{\beta/2}(u-v)\|_{L^2([0,L]^2)}}{1+ \| (I-\Delta)^{\beta/2}(u-v)\|_{L^2([0,L]^2)} }.
\end{equation}
In particular the metric spaces $\left(H_{loc}^{\beta}(\R^2); \tilde d_{\beta,2}\right)$ are complete for all $\beta\in \R$. For further results concerning local Sobolev spaces we refer to \cite{LSS}. Analogously, for all $\beta \in \R$ we define the spaces $W^{\beta,\infty}_{loc}(\R^2)$ by
$$
W^{\beta,\infty}_{loc}(\R^2):=\{ u : \forall K\subset \R^2 \mbox{ compact}, ~(I-\Delta)^{\beta/2}u \in L^{\infty}(K)\}.
$$
The metric spaces $W^{\beta,\infty}_{loc}(\R^2)$ are complete if endowed with the distances $ \tilde d_{\beta, \infty}$ defined by
\begin{equation}\label{tilde_d_beta,infty}
 \tilde d_{\beta, \infty}(u,v):=\sum_{L\in\N^*}2^{-L}\frac{ \| (I-\Delta)^{\beta/2}(u-v)\|_{L^{\infty}([0,L]^2)}}{1+ \| (I-\Delta)^{\beta/2}(u-v)\|_{L^{\infty}([0,L]^2)} }.
\end{equation}
For each fixed $\beta$ we have 
$$
W^{\beta,\infty}_{loc}(\R^2) \subseteq H_{loc}^{\beta}(\R^2).
$$

For each $L\in \N^*$, the norm $\| (I-\Delta)^{\beta/2} u\|_{L^{p}([0,L]^2)}$ is equivalent to the norm $\| D^\beta u\|_{L^{p}([0,L]^2)}$ for every $\beta \in \R$ and $1\leq p\leq +\infty$, thus it is possible to define other distances $ d_{\beta,2}$ and $ d_{\beta,\infty}$ such that $(H^{\beta}_{loc}(\R^2), d_{\beta,2})$ and $(W^{\beta, \infty}_{loc}(\R^2),  d_{\beta,\infty})$ are still complete. Indeed we have
\begin{align}\label{d_beta,2}
 \tilde d_{\beta,2}(u,v) & =\sum_{L\in\N^*}2^{-L}\frac{ \| (I-\Delta)^{\beta/2}(u-v)\|_{L^2([0,L]^2)}}{1+ \| (I-\Delta)^{\beta/2}(u-v)\|_{L^2([0,L]^2)} } \nonumber \\
& \leq \sum_{L\in\N^*} 2^{-L}C(L) \frac{ \| D^\beta( u-v)\|_{L^2([0,L]^2)} }{ 1+ \| D^\beta( u-v)\|_{L^2([0,L]^2)} } =:  d_{\beta,2} (u,v)
\end{align}
and 
\begin{equation}\label{d_beta,infty}
 \tilde d_{\beta,\infty}(u,v) \leq \sum_{L\in\N^*} 2^{-L}C_{\infty}(L) \frac{ \| D^\beta( u-v)\|_{L^{\infty}([0,L]^2)} }{ 1+ \| D^\beta( u-v)\|_{L^{\infty}([0,L]^2)} } =:  d_{\beta,\infty} (u,v),
\end{equation}
where $C(L)$ and $C_{\infty}(L)$ are constants at most proportional to $L^{\eta}$ for some $\eta\in \R$.

We say that a function $u$ belongs to the weighted Sobolev space $W^{\beta, \infty}(\R^2, 1+ |x|)$ for some fixed $\beta\in \R$ if 
$$
 \| (1+ |x|)^{-1}D^\beta u \|_{L^\infty(\R^2)} < +\infty.
$$ 
Whenever $\beta$ is negative or is not an integer the operator $D^\beta$ is understood as a pseudo-differential operator. The following inclusion holds: 
\begin{equation}\label{inclusion}
W^{\beta, \infty}(\R^2, 1+ |x|)\subseteq W^{\beta, \infty}_{loc}(\R^2).
\end{equation}

Below, we use $X \lesssim Y$ to denote the estimate $X \leq CY$ for some constant $C$. Unless stated otherwise $C$ is an unessential constant, in particular independent from the period $L$.


\section{The invariant measures}\label{inv_meas}

In the periodic setting and for each parameter $\gamma\in \R^+$, invariant probability measures, $\mu_{L,\gamma}$, were constructed, see \cite{AC}. In this section we define measures $\mu_{\gamma}$  as the weak limits of $\mu_{L,\gamma}$ when $L$ tends to infinity. Moreover we show that the support of $\mu_{\gamma}$ is the Sobolev space $H^\beta_{loc}(\R^2)$ for $\beta<1$. 

\subsection{Approximations of $\mu_{\gamma}$} 

On a probability space $(\Omega, \mathcal{F}, \P)$ consider a sequence of complex-valued i.i.d. Brownian motions, $\{W_{k^2}\}_{k\in \Z^2}$, and its increments, say them $\chi_k$, given by 
$$
\chi_k(\omega) = W_{k^2+1}(\omega)-W_{k^2}(\omega).
$$
Also, for each $L>0$ and $R=(R_1,R_2)\in \N^2$, consider the stochastic process defined
$$
\Phi_{L,R}(\omega,x):=\sum_{\substack{k>0 \\ k_1<LR_1\\ k_2<LR_2}}a_k^L(\omega)e^L_k(x),
$$
where 
$$
a_k^L(\omega):=\chi_k(\omega)\sqrt{\frac{2}{\gamma}}\left(\frac{L}{2\pi k}\right)^2 
$$
denotes for all fixed $k$ a complex-valued random variable with mean zero and variance $\frac{2}{\gamma}\left(\frac{L}{2\pi k}\right)^4$. Therefore $\Phi_{L,R}$ is a Gaussian vector with law and covariance matrix given respectively by,
$$
(\det M(L))^{-1/2} e^{-<a,M(L)^{-1}a>}\prod_{\substack{k>0 \\ k_1<LR_1\\ k_2<LR_2}}\gamma\frac{da_k^L(\omega)}{2\pi}
$$
and
$$
M(L)_{k,j}=\mathbb{E}_\P(a_k^L\bar a_j^L)=\delta^k_j\frac{2}{\gamma}\left(\frac{L}{2\pi k}\right)^4 ,
$$
where $\delta^k_j$ is the Kronecker symbol; thus we have,
$$
<a,M(L)^{-1}a>=\sum_{\substack{k>0 \\ k_1<LR_1\\ k_2<LR_2}}\left( \frac{2}{\gamma}\left(\frac{L}{2\pi k}\right)^4 \right)^{-1}|a_k^L(\omega)|^2 .
$$
Remark that, if $$\phi^{L,R}(x)=\sum_{\substack{k>0 \\ k_1<LR_1\\ k_2<LR_2}}\phi_k^{L,R} e^L_k(x),$$ then 
$$
\sum_{\substack{k>0 \\ k_1<LR_1\\ k_2<LR_2}}\left( \frac{2}{\gamma}\left(\frac{L}{2\pi k}\right)^4 \right)^{-1}|\phi_k^{L,R}|^2=\frac{\gamma}{2}\int_{\T} |\Delta \phi^{L,R}|^2dx;
$$
that is $$<\phi^{L,R},M(L)^{-1}\phi^{L,R}>=S(\phi^{L,R}),$$ where by $S(\phi^{L,R})$ we denoted the enstrophy. Hence the measure $d\mu_{L,\gamma}$, formally defined by
\begin{equation}\label{measureL}
d\mu_{L,\gamma}(\phi^L) := e^{-\frac{\gamma}{2}\int_{\T} |\Delta \phi^L|^2dx} \mathcal{D}\phi^L ,\quad \mathcal{D}\phi^L=
\prod_{k>0} \gamma\left(\frac{2\pi k}{L}\right)^4 \frac{d\phi_k^L}{2\pi}
\end{equation}
is the law of $\Phi_L$ on some Banach space, where 
\begin{equation}\label{PhiL}
\Phi_L(\omega, x):= \sum_{k>0} a_k^L(\omega)e^L_k(x).
\end{equation}
The measure $\mu_{L,\gamma}$ coincides with the Gibbs-type measure, relative to the enstrophy, defined in \cite{AC}. It was proved in \cite{AC} that $(H^\beta, H^2, \mu_{L,\gamma})$ is a complex abstract Wiener space for $\beta<1$; that is $H^2$ is a densely embedded Hilbert subspace of the Banach space $H^\beta$ and $\mu_{L,\gamma}$ is a Gaussian measure since
$$
\int e^{i\gamma l(\phi^L)} d\mu_{L,\gamma}(\phi^L)=e^{-\frac{1}{2}\gamma\| l\|_{2}^2}, \qquad \forall ~l\in (H^\beta)'\subset H^2.
$$
The space $H^\beta$ denotes the support of $\mu_{L,\gamma}$ and $H^2$ the associated Cameron-Martin space.

Below, we define $\Phi$ as the limit in $L^2(\Omega; H^{\beta}_{loc}(\R^2))$ of the sequence of random variables $\{\Phi_L\}_{L\in \N^*}$ given in equation \eqref{PhiL} and we define the measure $\mu_{\gamma}$ on functions of $\R^2$ as the image measure under the random variable $\Phi$. We follow the ideas of \cite{ASSuzz} where the Klein-Gordon equation on the real line is considered.

\begin{prop}\label{convergence}
The sequence $\{\Phi_L\}_{L\in \N^*}$ is a Cauchy sequence in $L^2(\Omega; H^{\beta}_{loc}(\R^2))$ for $\beta<1$.
\end{prop}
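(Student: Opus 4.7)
The first step is a reduction to estimates on compact sets. The topology on $H^\beta_{loc}(\R^2)$ is induced by the bounded metric $\tilde d_{\beta,2}$ defined in \eqref{tilde_d_beta,2}, which is a weighted sum over $L_0 \in \N^*$ of terms of the form $\frac{Y_{L_0}}{1+Y_{L_0}} \in [0,1]$, with $Y_{L_0}(u) := \|(I-\D)^{\beta/2} u\|_{L^2([0,L_0]^2)}$. Since the weights $2^{-L_0}$ are summable and each term is uniformly bounded by $1$, dominated convergence on the $L_0$-sum reduces the Cauchy property in $L^2(\Omega; H^\beta_{loc}(\R^2))$ to showing that for each fixed $L_0 \in \N^*$,
\begin{equation*}
Y_{L_0}(\Phi_L - \Phi_{L'}) \longrightarrow 0 \quad \text{in probability as } L,L' \to \infty.
\end{equation*}
Because $\Phi_L - \Phi_{L'}$ is Gaussian in the relevant function space, convergence in probability of such norms is equivalent to convergence of the square in $L^1(\Omega)$, so it suffices to estimate this second moment.

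The second step is the explicit computation of $\mathbb{E}[Y_{L_0}(\Phi_L - \Phi_{L'})^2]$. Substituting the Fourier expansions \eqref{PhiL} for $\Phi_L$ and $\Phi_{L'}$, and using that the coefficients $\chi_k$ are shared across $L$, the orthogonality $\mathbb{E}[\chi_j\overline{\chi_k}] = \delta_{jk}$ collapses the square of the sum to a single series, and Fubini yields
\begin{equation*}
\mathbb{E}[Y_{L_0}(\Phi_L - \Phi_{L'})^2] = \sum_{k>0}\int_{[0,L_0]^2}\bigl|H_k^L(x) - H_k^{L'}(x)\bigr|^2\,dx,
\end{equation*}
where $H_k^L(x) := \sqrt{2/\gamma}\,(L/2\pi k)^2(1+|2\pi k/L|^2)^{\beta/2}\,e_k^L(x)$ is the mode-$k$ contribution to $(I-\D)^{\beta/2}\Phi_L$.

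The third step is to show this series tends to $0$ as $L, L' \to \infty$, which I would do by splitting at a cutoff $|k|=K$. For the high-mode tail $|k|>K$, combining the variance $(L/2\pi k)^4$ of $a_k^L$ with the factor $(1+|2\pi k/L|^2)^\beta$ and the normalization of $e_k^L$ on $[0,L_0]^2$ produces a summand decay of order $|k|^{2\beta-4}$ (up to bounded factors); this is summable precisely when $\beta<1$, exactly the regime used later to identify $H^\beta_{loc}(\R^2)$ with the support of $\mu_\gamma$. Hence the tail is bounded uniformly in $L,L'$ and can be made arbitrarily small by choosing $K$ large. For the finitely many low modes $|k|\le K$, one interprets the discrete sums as Riemann approximations of a common stochastic integral on $\R^2$ with respect to a Gaussian white noise — viewing $\{\chi_k\}$ as a discretization of that noise on lattices of spacing $2\pi/L$ — and shows that each mode-wise term $\|H_k^L - H_k^{L'}\|_{L^2([0,L_0]^2)}$ vanishes in the joint limit, in the spirit of \cite{ASSuzz} for the one-dimensional Klein--Gordon setting.

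The main obstacle is the low-mode analysis: the individual amplitudes in the Fourier representation do not stabilize pointwise in $L$ for fixed integer $k$, so the required cancellation between $\Phi_L$ and $\Phi_{L'}$ must be extracted from the shared-noise coupling rather than from termwise convergence. Carrying out the Riemann-sum identification rigorously, i.e.\ showing that the $\Phi_L$ are discrete approximations to a stochastic-integral representation of $\Phi$ with respect to a white noise on $\R^2$, and doing so in the precise $L^2([0,L_0]^2)$ norm used in the metric $\tilde d_{\beta,2}$, is the technical heart of the argument; the condition $\beta<1$ enters both to control the UV tail and to give summability of the integrals defining the limiting object.
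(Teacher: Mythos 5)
There is a genuine gap, and it sits exactly where you locate ``the technical heart'': your Step 2 sets up a framework in which the low-mode cancellation you need cannot happen. Under the coupling you use there --- the same $\chi_k$ multiplying $e_k^L$ and $e_k^{L'}$ for each integer $k$ --- the identity $\mathbb{E}\bigl[Y_{L_0}(\Phi_L-\Phi_{L'})^2\bigr]=\sum_{k>0}\|H_k^L-H_k^{L'}\|^2_{L^2([0,L_0]^2)}$ is exact, so it already exhausts all the cancellation the shared noise provides. But a direct computation gives, for fixed $k$ and large $L,L'$,
\begin{equation*}
\|H_k^L-H_k^{L'}\|^2_{L^2([0,L_0]^2)}\;\approx\;\frac{2L_0^2}{\gamma\,(2\pi k)^4}\,(L-L')^2 ,
\end{equation*}
since $\|H_k^L\|^2\sim \tfrac{2L_0^2}{\gamma}L^2(2\pi k)^{-4}$ and the cross term contributes $\sim LL'$. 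This does not vanish (take $L'=2L$), so the sequence is simply not Cauchy under that coupling, and no ``extraction from the shared-noise coupling'' can rescue the termwise formula. The paper's proof works precisely because it abandons this pairing for the middle term: writing $\chi_l\,2^{-m}$ as an increment of $W$ over an interval of length $2^{-2m}$ and telescoping it into increments at scale $2^{-2n}$ re-couples each fine-lattice frequency $l/2^m$ to a nearby coarse-lattice frequency $(2^{n-m}l+j)/2^n$ carrying the \emph{same} elementary increment; the random factors then cancel exactly and the estimate reduces to the deterministic gradient bound $|\nabla_y(e^{i2\pi y\cdot x}y^{\beta-2})|\lesssim(1+|x|)\,y^{\beta-2}$ applied to frequency shifts of size at most the coarse spacing. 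That re-coupling (equivalently, exhibiting all $\Phi_L$ as Riemann sums of one stochastic integral against a common white noise) is the missing idea, not a technical detail to be filled in later.

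Your high-mode estimate also fails as stated. The summand is $\|H_k^L\|^2\approx \tfrac{2L_0^2}{\gamma}L^{-2}\bigl(2\pi k/L\bigr)^{-4}\bigl(1+(2\pi k/L)^2\bigr)^{\beta}$, i.e.\ an $L^{-2}$-weighted Riemann sum of $y\mapsto (2\pi y)^{-4}(1+(2\pi y)^2)^{\beta}$; it is \emph{not} $O(|k|^{2\beta-4})$ uniformly in $L$ (for $|k|\gtrsim L$ it is $\sim |k|^{2\beta-4}L^{2-2\beta}$, with $2-2\beta>0$). Consequently a cutoff at fixed integer radius $|k|=K$ leaves a tail comparable to $\int_{|y|>K/L}(2\pi y)^{-4}(1+(2\pi y)^2)^{\beta}\,dy$, which diverges as $L\to\infty$ because the integrand is non-integrable at $y=0$ in dimension two. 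The cutoff must be placed at fixed \emph{physical} frequency, $|k|\geq LR$ --- this is exactly the role of the truncation $\Phi_{L,R}$ in the paper --- after which the tail is $\approx\int_{[R,\infty)^2}y^{2\beta-4}\,dy$, small for large $R$ uniformly in $L$ when $\beta<1$. This also means the complementary ``low-mode'' set $|k|<LR$ is not finite but grows with $L$, so the finitely-many-modes reduction in your Step 3 is unavailable; the low- and high-mode regimes must both be handled by estimates uniform over growing index sets, as in the paper's three-term decomposition $\Phi_L-\Phi_S=(\Phi_L-\Phi_{L,R})+(\Phi_{L,R}-\Phi_{S,R})+(\Phi_{S,R}-\Phi_S)$. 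Your Step 1 reduction to compact sets and your identification of where $\beta<1$ enters are fine, but the two points above are where the proof actually lives.
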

\begin{proof}
First observe that 
$$
W^{\beta, \infty}(\R^2) \subseteq W^{\beta, \infty}(\R^2, 1+ |x|) \subseteq W^{\beta, \infty}_{loc}(\R^2) \subseteq H^{\beta}_{loc}(\R^2)
$$
and that we can write for $0<L<S$
$$
\Phi_L-\Phi_S = \Phi_L - \Phi_{L,R} + \Phi_{L,R} -\Phi_{S,R} + \Phi_{S,R} -\Phi_{S}.
$$
We will show that $\mathbb{E}_\P \| D^\beta( \Phi_L - \Phi_{L,R}) \|^2_{L^{\infty}(\R^2)}$ converges to zero when $R$ tends to infinity uniformly in $L$ and that 
$\mathbb{E}_\P \|  (1+ |x|)^{-1}D^\beta( \Phi_{L,R} - \Phi_{S,R} )\|^2_{L^{\infty}(\R^2)}$ tends to zero when $L$ tends to infinity uniformly in $R$.
We have 
\begin{align*}
\mathbb{E}_\P  \| D^\beta(\Phi_L - \Phi_{L,R}) \|^2_{L^{\infty}(\R^2)} & = \mathbb{E}_\P \left[ \sup_{x\in \R^2}\left| \sum_{\substack{k_1\geq LR_1 \\ k_2\geq LR_2}} \left( \frac{2\pi k}{L}\right)^{\beta-2} \chi_k(\omega)\sqrt{\frac{2}{\gamma}}e_k^L(x)\right|  \right]^2 \\
& \leq \mathbb{E}_\P \left[   \frac{1}{L} \sum_{\substack{k_1\geq LR_1 \\ k_2\geq LR_2}} \left( \frac{2\pi k}{L}\right)^{\beta-2}\left| \chi_k(\omega)\right|\sqrt{\frac{2}{\gamma}}  \right]^2 \\
&=   \frac{1}{L^2} \frac{2}{\gamma}\sum_{\substack{k_1\geq LR_1 \\ k_2\geq LR_2}} \sum_{\substack{h_1\geq LR_1 \\ h_2\geq LR_2}}\left( \frac{2\pi k}{L}\right)^{\beta-2}  \left( \frac{2\pi h}{L}\right)^{\beta-2}  \mathbb{E}_\P [\chi_k(\omega)\bar \chi_h(\omega)] \\
&\leq \frac{2}{\gamma}  \sum_{\substack{k_1\geq LR_1 \\ k_2\geq LR_2}} \left( \frac{2\pi k}{L}\right)^{2\beta-4}\\
&  \lesssim \int_{[R, +\infty)^2}  \frac{dy }{y^{4-2\beta}} \leq \varepsilon 
\end{align*}
for $R$ sufficiently big and uniformly in $L$, since $\beta<1$. 

Now suppose that $L=2^n$ and $S=2^m$ with $n< m$; we have
\begin{align}
D^\beta (\Phi_{2^n, R} -\Phi_{2^m, R})= &\sqrt{\frac{2}{\gamma}}\Bigg[ \sum_{\substack{k>0 \\ k_1<2^{n}R_1 \\ k_2< 2^{n}R_2}}\left( \frac{2\pi k}{2^n}\right)^{\beta-2}\chi_k(\omega)2^{-n}e^{i\frac{2\pi}{2^n}k\cdot x} \label{first_series} \\
& - \sum_{\substack{l>0 \\ l_1<2^{m}R_1 \\ l_2< 2^{m}R_2}}\left( \frac{2\pi l}{2^m}\right)^{\beta-2} \chi_l(\omega)2^{-m}e^{i\frac{2\pi}{2^m}l\cdot x}\Bigg]. \label{second_series}
\end{align}
Also we have 
$$
\chi_l(\omega)2^{-m} \simeq W_{\frac{l^2+1}{2^{2m}}}(\omega)- W_{\frac{l^2}{2^{2m}}}(\omega)=:\varepsilon_{2^{-2m},l^2}(\omega),
$$
where here $\simeq$ denotes the symbol of identification in law, and where $\varepsilon_{2^{-2m},l^2}(\omega)$ can be written as
\begin{equation}\label{delta_equ}
\varepsilon_{2^{-2m},l^2}(\omega)\simeq \sum_{j=0}^{2^{n-m}-1} \varepsilon_{2^{-2n},(2^{n-m}l+ j)^2}(\omega).
\end{equation}
Indeed
\begin{align*}
 \sum_{j=0}^{2^{n-m}-1} \varepsilon_{2^{-2n},(2^{n-m}l+ j)^2}(\omega)& =  \sum_{j=0}^{2^{n-m}-1} W_{\frac{(2^{n-m}l+ j)^2+1}{2^{2n}}}(\omega)- W_{\frac{(2^{n-m}l+ j)^2}{2^{2n}}}(\omega) \\
 &\simeq W_{\frac{l^2+1}{2^{2m}}}(\omega)- W_{\frac{l^2}{2^{2m}}}(\omega) \\
 &= \varepsilon_{2^{-2m},l^2}(\omega).
\end{align*}

Therefore
\begin{align*}
D^\beta &(\Phi_{2^n, R} -\Phi_{2^m, R}) \simeq \\
&  \simeq  \sqrt{\frac{2}{\gamma}} \sum_{\substack{l>0 \\ l_1<2^{m}R_1 \\ l_2< 2^{m}R_2}} \sum_{j=0}^{2^{n-m}-1}  \varepsilon_{2^{-2n},(2^{n-m}l+ j)^2}(\omega) \left[ \frac{e^{i2\pi\frac{(2^{n-m}l+j)}{{2^n}}\cdot x}}{\left(\frac{ 2^{n-m}l+j}{2^n}\right)^{2-\beta}} - \frac{e^{i2\pi\frac{l}{2^m}\cdot x}}{\left(\frac{ l}{2^m}\right)^{2-\beta}}  \right].
\end{align*}
where we write $2^{n-m}l+ j:= (2^{n-m}l_1+ j; 2^{n-m}l_2+ j)$ for any $l=(l_1,l_2)\in \Z^2$ and $j\in \{0, \cdots, 2^{n-m}-1\}$. To get the last equality  (in law) we used: in \eqref{first_series} the change of variable $k= 2^{n-m}l + j$; and in \eqref{second_series} the replacement of \eqref{delta_equ}.

Take the $L^2(\Omega)$ norm of $D^\beta (\Phi_{2^n, R} -\Phi_{2^m, R})$:
$$
\mathbb{E}_\P |D^\beta (\Phi_{2^n, R} -\Phi_{2^m, R})|^2\lesssim\sum_{\substack{l>0 \\ l_1<2^{m}R_1 \\ l_2< 2^{m}R_2}} \sum_{j=0}^{2^{n-m}-1} 2^{-2n}\left[ \frac{e^{i2\pi\frac{(2^{n-m}l+j)}{{2^n}}\cdot x}}{\left(\frac{ 2^{n-m}l+j}{2^n}\right)^{2-\beta}} - \frac{e^{i2\pi\frac{l}{2^m}\cdot x}}{\left(\frac{ l}{2^m}\right)^{2-\beta}}  \right]^2
$$
and use that the directional derivatives of the function $y \in \R^2 \mapsto \frac{e^{i 2\pi y\cdot x}}{y^{2-\beta}}$ are bounded by $C(\beta)\frac{(1+2\pi|x|)}{y^{2-\beta}}$ in order to obtain
\begin{align*}
\sum_{\substack{l>0 \\ l_1<2^{m}R_1 \\ l_2< 2^{m}R_2}}  \sum_{j=0}^{2^{n-m}-1} & 2^{-2n}  \left[\frac{e^{i2\pi\frac{(2^{n-m}l+j)}{{2^n}}\cdot x}}{\left(\frac{ 2^{n-m}l+j}{2^n}\right)^{2-\beta}} - \frac{e^{i2\pi\frac{l}{2^m}\cdot x} }{ \left( \frac{ l}{2^m } \right)^{2-\beta}}  \right]^2 \\
& \lesssim \sum_{\substack{l>0 \\ l_1<2^{m}R_1 \\ l_2< 2^{m}R_2}} \sum_{j=0}^{2^{n-m}-1} 2^{-2n} \frac{(1+2\pi|x|)^2}{\left(\frac{l}{2^m}\right)^{4-2\beta}}\left( \frac{j}{2^{n}}\right)^2.
\end{align*}
Use the inequality
$$
\sum_{j=0}^{2^{n-m}-1} \left( \frac{j}{2^{n}}\right)^2\leq \sum_{j=0}^{2^{n-m}} 2^{-2m}=2^{n-3m}
$$
to get 
$$
\sum_{\substack{l>0 \\ l_1<2^{m}R_1 \\ l_2< 2^{m}R_2}} 2^{n-3m} \frac{(1+2\pi|x|)^2}{\left(\frac{l}{2^m}\right)^{4-2\beta}}  
\lesssim \varepsilon (1+|x|)^2 \int_{[a, +\infty)^2} \frac{dy}{y^{4-2\beta}}  
\lesssim \varepsilon (1+|x|)^2 
$$
for $m$ sufficiently big and uniformly in $R$ since $\beta<1$ and $a\in (0,1)$. Back to $L$ and $S$ we have $\mathbb{E}_\P\|  (1+ |x|)^{-1}D^\beta( \Phi_{L,R} - \Phi_{S,R} )\|^2_{L^{\infty}(\R^2)}\leq \varepsilon$ for $L$ sufficiently big and uniformly in $R$. 
\end{proof}

In the following we denote by $\mu_{\gamma}$ the law of $\Phi$ where $\Phi$ is the limit of $\{\Phi_L\}_{L\in\N^*}$ in $L^2(\Omega; H^{\beta}_{loc}(\R^2))$. This $L^2$-convergence implies that $\mu_{L,\gamma}$ converges weakly to $\mu_{\gamma}$ in $H^{\beta}_{loc}(\R^2)$ when $L$ tends to infinity.

\subsection{Support of $\mu_{\gamma}$}

Here we study the support of the measure $\mu_{\gamma}$. Since $\mu_{\gamma}$ is the law of $\Phi$, its support is defined as the space in which $\Phi(\omega, \cdot)$ takes values $\P$-almost surely.

\begin{prop}
Let $\beta<1$, we have
$$
\supp(\mu_{\gamma}) = H^{\beta}_{loc}(\R^2).
$$
\end{prop}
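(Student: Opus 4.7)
The plan is to show the two inclusions separately. The inclusion $\supp(\mu_\gamma) \subseteq H^\beta_{loc}(\R^2)$ is immediate from Proposition \ref{convergence}: since $\Phi(\omega,\cdot) \in H^\beta_{loc}(\R^2)$ for $\P$-almost every $\omega$, the law $\mu_\gamma$ is carried by $H^\beta_{loc}(\R^2)$ and hence so is its topological support.

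For the reverse inclusion, by definition of support I must show that for every $\phi_0 \in H^\beta_{loc}(\R^2)$ and every $\epsilon > 0$, the ball $\{\phi : \tilde d_{\beta,2}(\phi,\phi_0) < \epsilon\}$ has positive $\mu_\gamma$-measure. The strategy is a two-step reduction. First, by density of $C_c^\infty(\R^2)$ in $(H^\beta_{loc}(\R^2),\tilde d_{\beta,2})$, I would reduce to the case $\phi_0 \in C_c^\infty(\R^2)$. Second, the small-ball case $\phi_0 = 0$ would be dispatched via the finite-dimensional truncations $\Phi_{L,R}$: these are non-degenerate Gaussians and so put positive mass on any neighborhood of $0$, while Proposition \ref{convergence} controls the tail $\Phi-\Phi_{L,R}$ in $L^2(\Omega; H^\beta_{loc})$, ultimately giving $\P(\Phi \in B_\epsilon(0)) > 0$.

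To go from $\phi_0 = 0$ to a general $h \in C_c^\infty(\R^2)$ I would use Cameron-Martin quasi-invariance: $\mu_\gamma$ is centered Gaussian on the separable Fr\'echet space $H^\beta_{loc}(\R^2)$ (as an $L^2$-limit of centered Gaussians), and $h$ should lie in its Cameron-Martin space. Concretely, if $\supp h \subseteq [0,L_0]^2$, then for every $L \geq L_0$ the function $h$ (extended by zero and periodized) sits in $H^2(\T)$, which is the Cameron-Martin space of $\mu_{L,\gamma}$. The classical Cameron-Martin formula furnishes an explicit density
\[
\exp\bigl(\gamma\!\int \Delta\phi\,\Delta h\,dx - \tfrac{\gamma}{2}\!\int|\Delta h|^2\,dx\bigr),
\]
uniformly $\mu_{L,\gamma}$-integrable in $L$, and passing to the limit along $L\to\infty$ via the weak convergence $\mu_{L,\gamma} \to \mu_\gamma$ produces the analogous density for $\mu_\gamma$. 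Quasi-invariance then transfers the small-ball estimate $\mu_\gamma(B_\epsilon(0)) > 0$ into $\mu_\gamma(B_\epsilon(h)) > 0$, which is what is needed.

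The main obstacle I expect is precisely the limiting Cameron-Martin step: one has to push the periodic densities through the weak limit, which requires uniform integrability and a careful identification of the limiting pairing $\int \Delta\phi\,\Delta h\,dx$ as a well-defined $\mu_\gamma$-measurable functional (since $\phi$ is only in $H^\beta_{loc}$ for $\beta<1$, this pairing must be interpreted in a generalized sense, using the compact support of $h$). A cleaner abstract alternative is to invoke directly the support theorem for Gaussian Radon measures on separable Fr\'echet spaces, reducing the problem to verifying $C_c^\infty(\R^2) \subseteq \mathcal{H}_{\mu_\gamma}$ from the covariance structure of $\Phi$, which in turn follows by taking limits of the explicit covariance of $\Phi_L$ on compactly supported test functions.
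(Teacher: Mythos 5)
Your proposal and the paper's proof are not proofs of the same statement. Immediately before the proposition the paper \emph{defines} the support of $\mu_\gamma$ as ``the space in which $\Phi(\omega,\cdot)$ takes values $\P$-almost surely,'' and with that convention its entire proof is the triangle inequality
$\mathbb{E}_\P\, d_{\beta,2}(\Phi,0)\leq \mathbb{E}_\P\, d_{\beta,2}(\Phi,\Phi_{L,R})+\mathbb{E}_\P\, d_{\beta,2}(\Phi_{L,R},0)$,
with the first term small by Proposition \ref{convergence} and the second bounded via the explicit computation $\mathbb{E}_\P\|D^\beta\Phi_{L,R}\|^2_{L^2([0,L]^2)}\lesssim\int_{[a,+\infty)^2}y^{2\beta-4}\,dy<+\infty$ for $\beta<1$. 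That is precisely your first inclusion and nothing more: the paper never asks whether every nonempty open subset of $H^\beta_{loc}(\R^2)$ has positive $\mu_\gamma$-measure. You read ``support'' in the standard topological sense and supply the genuinely harder half (density of $C^\infty_c$, small ball at the origin, Cameron--Martin translation). That is a legitimate and strictly stronger route, and it is the standard one for Gaussian measures; what it buys is an actual full-support statement, at the price of having to identify the Cameron--Martin space of the limit measure.

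Three steps of your outline need real work. First, your small-ball argument at $0$ controls the truncation $\Phi_{L,R}$ and the tail $\Phi-\Phi_{L,R}$ separately, but the two events must be handled jointly and their independence is not evident (the tail contains $\Phi-\Phi_L$, which involves all modes); it is cleaner to use that the topological support of a centered Gaussian Radon measure is the closure of its Cameron--Martin space, hence a closed linear subspace containing $0$. Second, the periodization of a generic $h\in C^\infty_c$ has a nonzero mean, i.e.\ a nonzero $k=0$ Fourier coefficient, whereas $H^2(\T)$ as the paper defines it (sums over $k>0$ only) excludes that mode; so the fixed-$L$ Cameron--Martin shift you invoke is not literally available and must be corrected by the $O(L^{-2})$ mean before passing to the limit. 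Third, the limiting identification of $\mathcal{H}_{\mu_\gamma}$ and of the pairing $\int\Delta\phi\,\Delta h\,dx$ for $\phi$ merely in $H^\beta_{loc}(\R^2)$ is the real analytic content and is left as an acknowledged obstacle; your ``abstract alternative'' of computing the covariance of $\Phi$ directly on compactly supported test functions is indeed the right way to close it. In short, your plan is sound and proves more than the paper does, but as written it is an outline; the paper's proof is complete only for the weaker, non-topological notion of support it adopts.
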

\begin{proof}
We have
$$
\mathbb{E}_\P  d_{\beta,2} (\Phi,0)\leq \mathbb{E}_\P  d_{\beta,2} (\Phi,\Phi_{L,R}) + \mathbb{E}_\P  d_{\beta,2} (\Phi_{L,R},0),
$$
where $d_{\beta,2}$ denotes the metric for $H^{\beta}_{loc}(\R^2)$ defined in \eqref{d_beta,2}.
On one hand and by Proposition \ref{convergence}, $\mathbb{E}_\P d_{\beta,2} (\Phi,\Phi_{L,R})$ tends to zero when $L$ and $R$ tend to infinity. On the other 
$\mathbb{E}_\P d_{\beta,2} (\Phi_{L,R},0) \leq C < +\infty$ since we have
\begin{align*}
\mathbb{E}_\P d_{\beta,2} (\Phi_{L,R},0) & \leq   \sum_L 2^{-L}C(L)\mathbb{E}_\P \|D^\beta \Phi_{L,R}\|_{L^2([0,L]^2)} \\
& \lesssim \sum_L 2^{-L}C(L) < +\infty.
\end{align*}
We used the fact that 
\begin{align*}
\mathbb{E}_\P\| D^\beta \Phi_{L,R}\|^2_{L^2([0,L]^2)}& = \sum_{ \substack{k>0 \\ k_1<LR_1 \\ k_2<LR_2}} \left( \frac{2\pi k}{L}\right)^{2\beta} \mathbb{E}_\P |a_k^L(\omega)|^2\\
&\lesssim \sum_{ \substack{k>0 \\ k_1<LR_1 \\ k_2<LR_2}} \left( \frac{ k}{L}\right)^{2\beta-4} \\
& \lesssim \int_{[a,+\infty)^2} \frac{dy}{y^{4-2\beta}} \leq C < +\infty
\end{align*}
for $a>0$ small enough; and that $C(L)$ depends on the period as previously explained in Subsection \ref{notations}.
\end{proof}

Formally the measure $\mu_{\gamma}$ is given by
\begin{equation}\label{measureMU}
d\mu_{\gamma} (\phi)= \frac{1}{Z} e ^{-\frac{\gamma}{2}\int_{\R^2} |\Delta \phi |^2dx} \mathcal{D}\phi
\end{equation}
where $Z$ is a suitable renormalizing constant. For all fixed $L\in \N^*$, the measure $\mu_{\gamma}$ on functions restricted to the compact phase space $[0,L]^2$ is in fact the measure $\mu_{L,\gamma}$. As in \cite{AC} for $(H^\beta, H^2, \mu_{L,\gamma})$ we can show that $(H^\beta_{loc}(\R^2), H^2_{loc}(\R^2), \mu_{\gamma})$ is a complex abstract Wiener space for $\beta<1$.


\section{The velocity flow on $\R^2$}\label{muFlow}

The aim of this section is to prove global existence and uniqueness of the Euler flow on the plane, under which $\mu_{\gamma}$ is invariant.

\subsection{Approximations of the vector field $B$}

We start by recalling some properties of the vector field $B_L$ in the periodic setting, given by equations \eqref{vectfield1}-\eqref{vectfield2} and previously derived in \cite{AC}. 

\begin{prop}\label{divergence_free}
The vector field $B_L$ is divergence-free with respect to the measure $\mu_{L,\gamma}$, that is $\delta_{\mu_{L,\gamma}} B_L=0$.
\end{prop}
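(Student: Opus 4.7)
The statement is a standard fact about Gibbs measures associated to a conserved Hamiltonian, and the computation is already carried out in \cite{AC}; so the proof reduces to recalling the two ingredients involved. My plan is therefore to unfold the definition of the $\mu_{L,\gamma}$-divergence and verify the two cancellations that make it vanish.

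First I would fix the meaning of $\delta_{\mu_{L,\gamma}}B_L$ by integration by parts on cylindrical test functions: since $\mu_{L,\gamma}$ has formal density $Z^{-1}e^{-\gamma S(\phi^L)}$ with respect to the flat reference measure $\mathcal{D}\phi^L$ of \eqref{measureL}, one obtains the usual decomposition
\begin{equation*}
\delta_{\mu_{L,\gamma}} B_L \;=\; \delta_{\rm flat}B_L \;+\; \gamma\,\langle B_L,\nabla S\rangle,
\end{equation*}
where $\delta_{\rm flat}B_L=-\sum_{k>0}\partial_{\phi_k^L}B_k^L$ is the coordinate divergence and $\langle B_L,\nabla S\rangle$ is the directional derivative of the enstrophy along $B_L$. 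Thus it suffices to prove (a) the Liouville property $\sum_{k>0}\partial_{\phi_k^L}B_k^L=0$ and (b) the conservation of enstrophy $\langle B_L,\nabla S\rangle=0$.

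For (a), I would use the explicit expression $B_k^L(\phi^L)=\sum_h \alpha_{h,k}^L\phi_h^L\phi_{k-h}^L$. The only terms that could produce a $\phi_k^L$-dependence in $B_k^L$ are those with $h=k$ (giving $\phi_k^L\phi_0^L$) or $h=0$ (giving $\phi_0^L\phi_k^L$). In both cases $h^{\perp}\!\cdot k=0$, hence $\alpha_{k,k}^L=\alpha_{0,k}^L=0$ and the coordinate-wise derivative $\partial_{\phi_k^L}B_k^L$ vanishes term by term, so summing over $k>0$ gives zero. For (b), I would note that enstrophy conservation along the Euler dynamics is classical and, in Fourier coordinates, amounts to showing that $\sum_{k>0}(2\pi k/L)^4\,\bar\phi_k^L B_k^L(\phi^L)+\mathrm{c.c.}=0$; substituting the definition of $B_k^L$ and relabelling the summation indices in the resulting cubic expression produces the antisymmetrization
\begin{equation*}
\sum_{h,k>0}\Bigl[\Bigl(\tfrac{2\pi k}{L}\Bigr)^{\!4}\alpha_{h,k}^L+\Bigl(\tfrac{2\pi h}{L}\Bigr)^{\!4}\alpha_{k-h,h}^L+\Bigl(\tfrac{2\pi (k-h)}{L}\Bigr)^{\!4}\alpha_{-h,k-h}^L\Bigr]\bar\phi_k^L\phi_h^L\phi_{k-h}^L,
\end{equation*}
which vanishes by the usual three-mode Euler identity (as worked out in \cite{AC}).

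There is no real obstacle: both (a) and (b) are algebraic identities, and the only mildly careful point is bookkeeping the complex conjugations arising from the identification $\phi_{-k}^L=\bar\phi_k^L$ and the fact that the sum is restricted to positive indices. The same computation was carried out in \cite{AC} in the case $L=2\pi$, and rescaling shows it is unchanged for arbitrary $L>0$; so, after setting up the duality formula, I would simply refer the two algebraic cancellations to \cite{AC}.
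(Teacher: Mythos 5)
Your proposal is correct and follows essentially the same route as the paper, which simply defers to \cite{AC} while noting that conservation of the enstrophy is the essential ingredient: you unfold that reference into the standard Gibbs-measure decomposition of $\delta_{\mu_{L,\gamma}}B_L$ into the flat (Liouville) divergence plus $\gamma$ times the derivative of $S$ along $B_L$, both of which vanish by the algebraic structure of $\alpha_{h,k}^L$. The only small slip is in the enumeration for part (a) --- with the identification $\phi_{-j}^L=\bar\phi_j^L$ the relevant modes are $h=k$ and $h=2k$ rather than $h=0$ --- but in every such case $h$ is parallel to $k$, so $h^\perp\cdot k=0$ and your conclusion is unaffected.
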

\begin{proof}
We refer to \cite{AC} and only remark that the conservation of the enstrophy is essential to prove the statement.
\end{proof}

We recall the proof of the $L^p_{\mu_{L,\gamma}}$-regularity of $B_L$ for any $p\geq 1$, as we are interested in the dependence on the period $L$ of such estimates. For further details see \cite{AC} or \cite{C}.

\begin{prop}\label{p_reg}
Let $\beta<-1$, then the vector field $B_L\in L^p_{\mu_{L,\gamma}}(H^\beta; H^\beta)$ for all $p\geq 1$.
\end{prop}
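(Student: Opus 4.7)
The plan is to use that $B_L$, being a quadratic polynomial in the i.i.d.\ Gaussian coefficients $\{\phi_h^L\}$, lies componentwise in the second Wiener chaos, so that the full $L^p$ estimate reduces by Gaussian hypercontractivity to a second-moment computation. That computation is then carried out by a Wick expansion in Fourier space, with explicit tracking of the period $L$.

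Concretely, I would first expand
\[
\Em \|B_L\|_{H^\beta}^2 = \sum_{k>0}\left(\frac{2\pi k}{L}\right)^{2\beta}\Em |B_k^L|^2
\]
and apply Wick's formula to
\[
\Em |B_k^L|^2 = \sum_{h,h'}\alpha_{h,k}^L\,\overline{\alpha_{h',k}^L}\;\Em\bigl[\phi_h^L \phi_{k-h}^L \overline{\phi_{h'}^L} \overline{\phi_{k-h'}^L}\bigr].
\]
Since the $\phi_j^L$ are independent complex Gaussians with $\Em|\phi_j^L|^2 = \tfrac{2}{\gamma}(L/(2\pi j))^4$ and $\Em[\phi_j^L \phi_{j'}^L]=0$, only the pairings $h'=h$ and $h'=k-h$ survive, giving
\[
\Em |B_k^L|^2 \lesssim \frac{1}{\gamma^2}\sum_{h}(\alpha_{h,k}^L)^2 \frac{L^8}{|h|^4|k-h|^4}.
\]
From the explicit expression for $\alpha_{h,k}^L$ one has the elementary bound $|\alpha_{h,k}^L|\lesssim L^{-3}(|h|^2+|h||k|)$, and this splits the double sum into two standard $\Z^2$ convolution estimates; the piece proportional to $|k|^2$ is tamed by $\sum_h |h|^{-2}|k-h|^{-4}\lesssim |k|^{-2}$, while the remaining piece is $O(1)$. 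Both contributions are bounded by $O(L^2)$ uniformly in $k$, yielding
\[
\Em \|B_L\|_{H^\beta}^2 \lesssim L^{2-2\beta}\sum_{k>0}|k|^{2\beta},
\]
and the $k$-sum converges precisely when $\beta<-1$.

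For $1\leq p\leq 2$ Jensen suffices. For $p>2$, since each coordinate $B_k^L$ lies in the second inhomogeneous Gaussian chaos, the $H^\beta$-valued random variable $B_L$ also lies (coordinatewise) in that chaos, so the Kahane--Khintchine inequality (equivalently, hypercontractivity of the Ornstein--Uhlenbeck semigroup) gives
\[
\left(\Em \|B_L\|_{H^\beta}^p\right)^{1/p} \leq C(p)\left(\Em \|B_L\|_{H^\beta}^2\right)^{1/2},
\]
and combining this with the previous display finishes the argument. The subtle point, which I believe is the authors' main emphasis, is the explicit $L^{2-2\beta}$ dependence extracted in the Wick step: keeping the constants uniform in the relevant sense under $L\to\infty$ is what is needed in the sequel to transfer the bound to $\mu_\gamma$, rather than merely asserting finiteness for each fixed $L$.
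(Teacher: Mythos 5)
Your argument is correct, and the heart of it --- the second-moment computation via Wick pairing, the bound $|\alpha_{h,k}^L|\lesssim L^{-3}(|h|^2+|h|\,|k|)$, the two convolution estimates over $\Z^2$, and the resulting $O(L^{2-2\beta})$ bound with convergence of $\sum_k |k|^{2\beta}$ exactly for $\beta<-1$ --- coincides with what the paper does. Where you genuinely diverge is in the passage to higher moments. The paper never isolates the case $p=2$: it estimates $\Em\|B_L\|_{H^\beta}^{2p}$ directly by applying Minkowski's inequality twice (once to the sum over $k$, once to the double sum over $h,h'$) and then invoking the explicit Gaussian moment bounds $\bigl(\Em|\phi_h^L|^{2p}\bigr)^{1/p}\lesssim p!\,\Em|\phi_h^L|^2$, which produces the factor $p!^2$ and the final bound $(L^{2-2\beta}C)^p$. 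You instead observe that $B_L$ is an $H^\beta$-valued element of the second Wiener chaos and apply hypercontractivity (vector-valued Kahane--Khintchine) to reduce all $L^p$ norms to the $L^2$ norm. Both routes are legitimate; yours is arguably cleaner, since it confines the Wick/pairing argument to the genuine covariance computation (where it is exactly what Wick's theorem gives) rather than interleaving it with $p$-th powers, and it packages the $p$-dependence of the constant into a single standard inequality. The paper's route is more elementary and self-contained, requiring only H\"older/Minkowski and moments of Gaussians, and it displays the constant's growth in $p$ explicitly, which is what is reused later in Corollary \ref{reg}. Your closing remark is also on target: the uniform-in-$k$ bound and the explicit $L^{2-2\beta}$ dependence are precisely what the sequel needs when summing $2^{-L}C(L)L^{2-2\beta}$ over $L$.
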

\begin{proof}
It is enough to show that $\Em\|B_L(\varphi^L)\|^{2p}_{H^\beta}<+\infty$ for all $p>1$. We have
\begin{align*}
\Em\|B_L(\varphi^L)\|^{2p}_{H^\beta}&=\Em  \left[\sum_{k>0} \left( \frac{2\pi k}{L}\right)^{2\beta}|B_k^L(\varphi^L)|^2 \right]^p \\
& \leq \left[ \sum_{k>0}\left(  \frac{2\pi k}{L}\right)^{2\beta} \left(\Em |B_k^L(\varphi^L)|^{2p} \right)^{1/p} \right]^p
\end{align*}
From $B_{k}^L(\varphi^L)=\sum_h \alpha^L_{h,k}\phi_h^L \phi_{k-h}^L$ we have that
\begin{align*}
\Em|B_k^L(\phi^L)|^{2p}& = \left[  \sum_{h,h'}\alpha_{h,k}^L\alpha_{h',k}^L\Em(\phi_h^L\phi_{k-h}^L\bar \phi_{h'}^L\bar \phi_{k-h'}^L) \right]^p\\ 
& \leq \left[  \sum_{h,h'}\alpha_{h,k}^L\alpha_{h',k}^L \left( \Em (\phi_h^L\phi_{k-h}^L\bar \phi_{h'}^L\bar \phi_{k-h'}^L)^p\right)^{1/p}  \right]^p \\
&= \left[  2 \sum_{h}|\alpha_{h,k}^L|^2 \left(\Em|\phi_h^L|^{2p}\right)^{1/p}\left(\Em|\phi_{k-h}^L|^{2p}\right)^{1/p}  \right]^p\\
& \lesssim p!^2 \left[ \sum_{h}|\alpha_{h,k}^L|^2\frac{L^8}{h^4(k-h)^4} \right]^p \\
&\leq p!^2 \left[ L^2 \sum_{h}\left[\frac{(h^\perp\cdot k)(k\cdot h)}{k^2} - \frac{h^\perp\cdot k}{2}\right]^2\frac{1}{h^4(k-h)^4} \right]^p \\
& \leq (L^{2} C)^p <+\infty , \qquad \forall ~p>1. 
\end{align*}
Therefore, since $\beta<-1$,
\begin{equation}\label{B}
\Em\|B_L(\varphi^L)\|^{2p}_{H^\beta} \lesssim \left( \frac{1}{L^{2\beta-2}}\sum_{k>0}\frac{1}{k^{-2\beta}} \right)^p \leq (L^{2-2\beta} C)^p <+\infty, \qquad \forall ~p>1.
\end{equation}
\end{proof}

\begin{rem}\label{conv B_L to B}
For the vector field on $[0,L]^2$ the expression $B_L(\phi)=\sum_k B_k^L(\phi) e_k^L(x)$ where $B_k^L$ is defined in \eqref{vectfield2} is valid. Note however that the Euler vector field does not depend on $L$; it is the same on every finite phase space approximation and thus $B_L$ trivially converges to $B$, the Euler vector field on $\R^2$, when $L$ goes to infinity. 
\end{rem}

Next we show that $B: H^\beta_{loc}(\R^2) \to H^\beta_{loc}(\R^2)$ is regular with respect to $L^p_{\mu_{\gamma}}$ for all $p\geq 1$. 

\begin{cor}\label{reg}
Let $\beta<-1$, then $B\in L^p_{\mu_{\gamma}}(H^\beta_{loc}(\R^2); H^\beta_{loc}(\R^2))$ for all $p\geq 1$.
\end{cor}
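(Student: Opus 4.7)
The plan is to reduce the corollary to the periodic bound of Proposition \ref{p_reg} via restriction to compact boxes. Two identifications drive the argument: by the discussion at the end of Section \ref{inv_meas}, the restriction of $\mu_{\gamma}$ to the compact phase space $[0,L]^2$ coincides with $\mu_{L,\gamma}$; and by Remark \ref{conv B_L to B} the Euler vector field $B$ acting on $\phi|_{[0,L]^2}$ is precisely $B_L(\phi^L)$. Together these yield, for every fixed $L \in \N^*$,
\begin{equation*}
\mathbb{E}_{\mu_{\gamma}} \bigl\|D^\beta B(\phi)\bigr\|_{L^2([0,L]^2)}^{2p} = \Em \bigl\|D^\beta B_L(\phi^L)\bigr\|_{L^2([0,L]^2)}^{2p}.
\end{equation*}

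Next, using the equivalence $\|(I-\D)^{\beta/2}\cdot\|_{L^2([0,L]^2)} \simeq \|D^\beta\cdot\|_{L^2([0,L]^2)}$ recalled in Subsection \ref{notations} (with constants at most polynomial in $L$), together with Proposition \ref{p_reg} and its explicit estimate \eqref{B}, the right-hand side is controlled by $(L^{2-2\beta}C)^p L^{\eta p}$; in particular $\|D^\beta B(\phi)\|_{L^2([0,L]^2)} \in L^{2p}(\mu_{\gamma})$ for each $L\in\N^*$ and each $p\geq 1$, which is the local $L^p$ bound needed for the flow theory. To obtain the claim in the form $B \in L^p_{\mu_{\gamma}}(H^\beta_{loc}(\R^2); H^\beta_{loc}(\R^2))$, I would unfold the definition \eqref{d_beta,2}: since each summand satisfies $\tfrac{x}{1+x}\leq 1$ and $\sum_L 2^{-L}C(L) < +\infty$ (because $C(L)\lesssim L^\eta$), the quantity $d_{\beta,2}(B(\phi),0)$ is uniformly bounded in $\phi$, so $\mathbb{E}_{\mu_{\gamma}}[d_{\beta,2}(B(\phi),0)^p] < +\infty$ for all $p \geq 1$.

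The main subtle point is the rigorous identification $\mu_{\gamma}|_{[0,L]^2}=\mu_{L,\gamma}$ and $B|_{[0,L]^2}=B_L$; once these are established from the Cauchy construction of $\Phi$ in Proposition \ref{convergence} (each $\Phi_L$ having law $\mu_{L,\gamma}$ by definition) and Remark \ref{conv B_L to B}, the remainder is a routine combination of the periodic moment bound \eqref{B} with the summability structure built into the $H^\beta_{loc}(\R^2)$ metric.
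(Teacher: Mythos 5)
Your proposal is correct and follows essentially the same route as the paper: both reduce the claim to the periodic moment bound \eqref{B} of Proposition \ref{p_reg} via the identifications $\mu_{\gamma}\vert_{[0,L]^2}=\mu_{L,\gamma}$ and $B\vert_{[0,L]^2}=B_L$, and then exploit the summability of $2^{-L}C(L)L^{2-2\beta}$ built into the metric $d_{\beta,2}$. The only cosmetic difference is in the final bookkeeping: the paper pulls $\mathbb{E}_{\mu_\gamma}$ through the series defining $d_{\beta,2}$ by a Minkowski-type inequality in $L^{2p}$ and bounds each ratio by $\mathbb{E}_{\mu_{L,\gamma}}\|B_L\|^{2p}_{H^\beta}$, whereas you first establish the local moment bounds (which give $B(\phi)\in H^\beta_{loc}(\R^2)$ $\mu_\gamma$-a.s.) and then observe that the metric is uniformly bounded — both are valid and rest on the same estimates.
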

\begin{proof}
We show that $\E_{\mu_{\gamma}} \left| d_{\beta,2}(B(\phi),0) \right|^{2p}< +\infty$ for all $p>1$, where $ d_{\beta,2}$ denotes the metric for $H^{\beta}_{loc}(\R^2)$ defined in \eqref{d_beta,2}. We have
\begin{align*}
\E_{\mu_{\gamma}} \left| d_{\beta,2}(B(\phi),0)  \right|^{2p}& = \E_{\mu_{\gamma}} \left| \sum_{L\in \N^*} 2^{-L}C(L) \frac{\|B(\phi)\|_{H^\beta([0,L]^2)}}{1+ \|B(\phi)\|_{H^\beta([0,L]^2)} } \right|^{2p} \\
& \leq \left[  \sum_{L\in \N^*} 2^{-L}C(L)  \left( \E_{\mu_{\gamma}}  \frac{\|B(\phi)\|^{2p}_{H^\beta([0,L]^2)}}{(1+ \|B(\phi)\|_{H^\beta([0,L]^2)} )^{2p}} \right)^{1/2p}  \right]^{2p} \\
& \leq \left[ \sum_{L\in \N^*} 2^{-L}C(L)\left(\E_{\mu_{L,\gamma}} \| B_L(\phi)\|^{2p}_{H^\beta([0,L]^2)}\right)^{1/2p} \right]^{2p},
\end{align*}
where we got the last inequality from Proposition \ref{p_reg}. Again, from estimative \eqref{B} and since $\beta<-1$, we conclude 
$$
\E_{\mu_{\gamma}} \left|d_{\beta,2}(B(\phi),0)  \right|^{2p} \lesssim \left[ \sum_{L\in \N^*} 2^{-L}C(L) L^{2-2\beta} \right]^{2p} <+\infty, \qquad \forall ~p>1.
$$
\end{proof}

In the next Lemma, we prove existence for the approximated Euler equations.

\begin{lem}
For any fixed $L\in\N^*$ and $R\in \N^2$ we consider a phase space projection on $[0,L]^2$ and a finite dimensional approximation of equation \eqref{vorEuler}; thus there exists a globally defined Euler flow, say it $U^{L,R}$, defined on $H^\beta_{loc}(\R^2)$. 
\end{lem}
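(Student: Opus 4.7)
The plan is to recognise the finite dimensional approximation as a smooth ODE on a finite dimensional vector space, and to combine the resulting local existence with an a priori bound coming from the conservation of the enstrophy. After projecting onto the finite family of Fourier modes indexed by $\{k>0:k_1<LR_1,\ k_2<LR_2\}$, equation \eqref{vorEuler} reduces to the finite system
$$
\frac{d}{dt}\phi_k^{L,R}(t) = \sum_{h} \alpha_{h,k}^L\,\phi_h^{L,R}(t)\,\phi_{k-h}^{L,R}(t),
$$
with $h$ and $k-h$ also restricted to the projection support. The right hand side is polynomial in the finitely many complex coordinates $\phi_k^{L,R}$, in particular locally Lipschitz, so the Cauchy--Lipschitz theorem produces a unique maximal solution for any initial condition in the finite dimensional subspace $V_{L,R}:=\mathrm{span}\{e^L_k : k>0,\ k_1<LR_1,\ k_2<LR_2\}$.

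To promote this local flow to a global one I rely on enstrophy conservation. The Galerkin projection $P_{L,R}$ onto $V_{L,R}$ commutes with $\Delta$ and is self-adjoint in $L^2([0,L]^2)$, and $\phi^{L,R}(t)$ lies in its range for all $t$. Differentiating $S(\phi^{L,R})=\tfrac{1}{2}\int_{[0,L]^2}|\Delta\phi^{L,R}|^2\,dx$ along the truncated dynamics and moving the projection across the inner product reduces the computation to
$$
\frac{d}{dt}S(\phi^{L,R})=-\int_{[0,L]^2}\Delta\phi^{L,R}\,(\nabla^\perp\phi^{L,R}\cdot\nabla)\Delta\phi^{L,R}\,dx=0,
$$
where the last identity follows from integration by parts together with $\nabla\cdot\nabla^\perp\phi^{L,R}=0$. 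Since $V_{L,R}$ is finite dimensional, all norms on it are equivalent, so the a priori bound $\|\phi^{L,R}(t)\|_{H^2([0,L]^2)}^2\lesssim S(\phi^{L,R}(0))$ prevents the coefficients from blowing up in finite time, and the maximal solution is defined on all of $\R$.

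Finally, to view the flow as being defined on $H^\beta_{loc}(\R^2)$, I start from any $\phi\in H^\beta_{loc}(\R^2)$, restrict to $[0,L]^2$, project onto $V_{L,R}$, run the flow, and extend periodically with period $L$ to the whole plane; the result lies in $V_{L,R}\subset H^\beta_{loc}(\R^2)$ and depends continuously on the finitely many Fourier coefficients retained by the projection. The only step that is not completely routine is the verification of enstrophy conservation along the truncated dynamics; the key point is that $P_{L,R}$ commutes with $\Delta$, so the cancellation produced by the integration by parts is inherited from the continuous equation and survives the Galerkin truncation.
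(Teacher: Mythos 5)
Your finite-dimensional core matches the paper's argument: the truncated system is a quadratic, hence locally Lipschitz, ODE on $\C^d$, Cauchy--Lipschitz gives a unique maximal solution, and a conserved coercive quantity excludes finite-time blow-up. The paper invokes conservation of the energy where you invoke conservation of the enstrophy; both quantities are conserved by this Galerkin truncation (your observation that $P_{L,R}$ is self-adjoint and commutes with $\Delta$, so the integration-by-parts cancellation survives the truncation, is the right way to see it), and since the phase space is finite dimensional either one bounds all the retained Fourier coefficients. That discrepancy is immaterial.

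The step that does not work as written is the promotion of the finite-dimensional flow to a map on $H^\beta_{loc}(\R^2)$. You restrict $\phi$ to $[0,L]^2$, project onto $V_{L,R}$, evolve, and then extend periodically, thereby discarding both the unresolved modes $\Pi_R^\perp\phi^L$ and the data $\phi$ restricted to the complement of $[0,L]^2$. The resulting map satisfies $U^{L,R}(0,\phi)\neq\phi$ for generic $\phi\in H^\beta_{loc}(\R^2)$, so it is not a flow starting from $\phi$ at all; worse, $U^{L,R}(t,\phi)$ then stays a fixed positive distance from $\phi$ uniformly in $t$ and cannot converge to the genuine Euler flow $U(t,\phi)$ as $L,R\to\infty$, which is precisely what these approximations are used for in the proof of Theorem \ref{existence} (the Skorohod argument and the identification of the limit in $d_{\beta,2}$). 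The paper instead sets $U^{L,R}(t,\phi):=U^{L,R}(t,\phi^{L,R})+\Pi_R^\perp\phi^L+\phi\big|_{([0,L]^2)^C}$, i.e.\ it acts as the identity on all the discarded data; this restores $U^{L,R}(0,\phi)=\phi$ and keeps the approximation compatible with the limiting procedure. The repair is small --- carry the complementary data along unchanged instead of replacing it by a periodic extension --- but as stated your construction does not produce the object the rest of the paper needs.
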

\begin{proof}
We study the following system of ODEs for all $k\in \Z^2$ with $k>0$, $k_1<LR_1$ and $k_2<LR_2$:
\begin{align*}\label{LR}
\frac{d}{dt}U_k^{L,R}(t,\phi^{L,R})& =B^{L,R}_k(U^{L,R}(t,\phi^{L,R}))\\
U_k^{L,R}(0,\phi^{L,R}) & =\phi^{L,R}_k
\end{align*}
for 
$$
\phi^{L,R}(t,x)=\sum_{ \substack{k>0 \\ k_1<LR_1 \\ k_2<LR_2}}\phi^{L,R}_k(t) e_k^L(x) \in \C^d,
$$
where $d=d(R):=\#\{k\in \Z^2 :  k>0 \mbox{ and } k_i<LR_i \mbox{ for } i=1,2 \}$ and where
$$
B^{L,R}_k(\phi^{L,R})= \frac{1}{L}\left( \frac{2\pi}{L} \right)^2 \sum_{\substack{h>0 \\ h\neq k \\ h_1<LR_1 \\ h_2<LR_2}}\left[\frac{(h^\perp\cdot k)(k\cdot h)}{k^2} - \frac{h^\perp\cdot k}{2}\right]\phi_h^{L,R}\phi_{k-h}^{L,R}.
$$
From the regularity of the finite dimensional quadratic vector field $B^{L,R}$  we know that there exists an associated global flow on $\C^d$, that is for all positive $k\in \Z^2$ with $k_1<LR_1$ and $k_2<LR_2$ we have
$$
U_k^{L,R}(t,\phi^{L,R})= \phi^{L,R}_k + \int_0^t B^{L,R}_k(U^{L,R}(s,\phi^{L,R}))ds, \qquad \forall ~t\in \R.
$$
Now, for $\phi^L\in H^\beta$ we write
$$
\phi^L= \Pi_R \phi^L + \Pi_R^\perp \phi^L= \phi^{L,R} + \Pi_R^\perp \phi^L,
$$
where $\Pi_R$ is the orthogonal projection on the subspace spanned by $\{e_k :  k>0 \mbox{ and } k_i<LR_i \mbox{ for } i=1,2\}$. Therefore, if we define
$$
U_k^{L,R}(t,\phi^L):= U_k^{L,R}(t,\phi^{L,R}) + \Pi_R^\perp \phi^L,
$$
then $U^{L,R}(t,\phi^L)$ is in fact a $B^{L,R}$-flow on $H^\beta([0,L]^2)$. 
Finally, for $\phi \in H^\beta_{loc}(\R^2)$ we write
$$
\phi = \left. \phi\right|_{{[0,L]^2}} + \left. \phi\right|_{{[0,L]^2}^C}=  \phi^L+ \left. \phi\right|_{{[0,L]^2}^C}
$$
and we define
$$
U_k^{L,R}(t,\phi):= U_k^{L,R}(t,\phi^L) + \left. \phi\right|_{{[0,L]^2}^C};
$$
it follows that $U^{L,R}(t,\phi)$ is in fact a $B^{L,R}$-flow on $H^\beta_{loc}(\R^2)$. From the conservation of the energy we know that the flow is defined for all times. Furthermore we have 
$$
U^{L,R}(t,\phi)=\sum_{ \substack{k>0 \\ k_1<LR_1 \\ k_2<LR_2}} U_k^{L,R}(t,\phi)e_k^L
$$
with $U_k^{L,R}(\cdot,\phi)\in C(\R; \C)$ for all $k$.
\end{proof}

\subsection{Existence of a unique invariant flow}

Here, we prove the existence of a unique and invariant flow for \eqref{vorEuler} taking values in $H^\beta_{loc}(\R^2)$ for $\beta<-1$.

\begin{thm}\label{existence}

Let $\beta<-1$. There exists a globally defined flow $ U(\cdot, \phi)\in C(\R;  H^\beta_{loc}(\R^2) )$ for $\mu_\gamma$- a.e. $ \phi \in H^\beta_{loc}(\R^2)$, such that
\begin{enumerate}
\item \label{one} 
\begin{equation*}
U(t, \varphi)=\varphi+ \int_0^t B(U(s, \varphi))ds, \qquad \mu_{\gamma}-a.e. ~ \varphi \in H^\beta_{loc}(\R^2) , ~\forall ~t\in \R;
\end{equation*}
\item \label{two}
the flow is unique;
\item \label{three}
the measure $\mu_{\gamma}$ is invariant under the flow:
\begin{equation*}
\int f(U(t, \phi))d\mu_\gamma(\phi)=\int f(\varphi)d\mu_{\gamma}(\varphi), \qquad \forall f\in C_b, ~~ \forall~t\in \R.
\end{equation*}
\end{enumerate}
\end{thm}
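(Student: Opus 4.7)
The plan is to combine the Galerkin approximations $U^{L,R}$ built in the preceding lemma with the uniqueness theory of Ambrosio--Figalli \cite{AF} for vector fields that are $L^p$ and divergence-free with respect to a Gaussian measure. The whole argument takes place in $(H^\beta_{loc}(\R^2), H^2_{loc}(\R^2), \mu_\gamma)$ with $\beta<-1$, where $B$ has been shown in Corollary \ref{reg} to be $L^p_{\mu_\gamma}$, and the $\mu_\gamma$-divergence-freeness of $B$ is obtained by letting $L\to\infty$ in Proposition \ref{divergence_free} using the weak convergence $\mu_{L,\gamma}\rightharpoonup\mu_\gamma$ (Proposition \ref{convergence}).

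The first step is tightness of the path-laws. For each finite $L,R$ the truncated flow $U^{L,R}(t,\cdot)$ leaves $\mu_{L,\gamma}$ invariant, so the one-time marginals of $t\mapsto U^{L,R}(t,\phi)$ with $\phi\sim\mu_{L,\gamma}$ are tight on $H^\beta_{loc}(\R^2)$ by Proposition \ref{convergence}. Equicontinuity in $t$ comes from the integral identity
$$
U^{L,R}(t,\phi)-U^{L,R}(s,\phi)=\int_s^t B^{L,R}(U^{L,R}(r,\phi))\,dr
$$
combined with the uniform $L^p$-estimate on $B^{L,R}$ supplied by Proposition \ref{p_reg}. Together these yield tightness of the laws of the paths in $C([-T,T];H^\beta_{loc}(\R^2))$ for every $T>0$.

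Extracting a subsequential weak limit and applying a Skorokhod representation produces an a priori stochastic flow $\tilde U$ which, for $\mu_\gamma$-a.e.\ $\phi$ and every $t\in\R$, satisfies
$$
\tilde U(t,\phi)=\phi+\int_0^t B(\tilde U(s,\phi))\,ds.
$$
Passing to the limit inside the integrand relies on Corollary \ref{reg} (uniform $\mu_\gamma$-integrability of the approximating vector fields) and Remark \ref{conv B_L to B} (identification of the limit vector field). Invariance of $\mu_\gamma$ under each $\tilde U(t,\cdot)$, item (iii), is then obtained by passing to the limit in
$$
\int f(U^{L,R}(t,\phi))\,d\mu_{L,\gamma}(\phi)=\int f(\phi)\,d\mu_{L,\gamma}(\phi),\qquad f\in C_b,
$$
using weak convergence of $\mu_{L,\gamma}$ to $\mu_\gamma$.

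The main obstacle is uniqueness, item (ii). For this I would invoke the Ambrosio--Figalli theorem on abstract Wiener spaces applied to $B$: the two necessary hypotheses are exactly the $L^p_{\mu_\gamma}$-bound from Corollary \ref{reg} and the $\mu_\gamma$-divergence-freeness noted above. The theorem delivers uniqueness of the regular Lagrangian flow and, as a by-product, shows that any probability measure on the path space concentrated on solutions of the integral equation and having $\mu_\gamma$ as its one-time marginal must decompose as a Dirac mass on a single trajectory for $\mu_\gamma$-a.e.\ initial datum. Consequently the stochastic limit $\tilde U$ is in fact deterministic; calling the resulting map $U$ yields (i), (ii) and (iii) simultaneously. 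The delicate point is verifying that the \cite{AF} framework genuinely applies to this non-classical Gaussian setting, where $\mu_\gamma$ exists only as a weak limit of periodic Gaussian measures on local Sobolev spaces and the natural Cameron--Martin structure $H^2_{loc}(\R^2)$ is not a Hilbert space in the traditional sense.
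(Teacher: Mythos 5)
Your overall architecture (Galerkin flows $U^{L,R}$, tightness and Skorokhod representation, passage to the limit in the integral equation, invariance by weak convergence of $\mu_{L,\gamma}$, and uniqueness via the Ambrosio--Figalli continuity-equation machinery) matches the paper's. The existence and invariance parts are essentially the paper's argument. But the uniqueness step, which you correctly identify as the main obstacle, is left with a genuine gap: you propose to invoke the abstract Wiener space theorem of \cite{AF} directly for $B$ on $(H^\beta_{loc}(\R^2), H^2_{loc}(\R^2), \mu_\gamma)$, and then yourself concede that it is unclear whether that framework applies, since $H^\beta_{loc}(\R^2)$ is only a Fr\'echet-type metric space (not a separable Banach space) and $H^2_{loc}(\R^2)$ is not a Hilbert space. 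Flagging the difficulty is not the same as resolving it; as written, the decisive step of the proof rests on a theorem whose hypotheses you have not verified and which, in the form you cite it, does not cover your setting.

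The paper closes exactly this gap by never applying \cite{AF} on the plane. Instead it works torus by torus: for each fixed $L$, the triple $(H^\beta, H^2, \mu_{L,\gamma})$ \emph{is} a genuine complex abstract Wiener space, and there the uniqueness of the bounded weak solution $k^L_t\equiv 1$ of the continuity equation is proved by hand, exploiting that test functions in $\mathcal{D}_t$ depend on finitely many coordinates, that each Galerkin truncation $B^n_L$ is quadratic (hence its continuity equation has a unique solution), and that $B_L$ is $\mu_{L,\gamma}$-divergence-free. Theorem 4.7 of \cite{AF} then yields that the law of $\tilde U^L$ is a Dirac mass on trajectories, so the flow on each $[0,L]^2$ is deterministic and unique. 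Finally, the consistency relation $U^L(t,\varphi^L)\equiv U^M(t,\varphi^M)|_{[0,L]^2}$ for $M>L$ transfers uniqueness and determinism to the flow $U$ on $H^\beta_{loc}(\R^2)$. If you want to complete your proposal, you should replace the direct invocation of \cite{AF} on $\mu_\gamma$ by this localization-and-gluing argument (or supply an extension of the \cite{AF} theory to the local Sobolev setting, which is a nontrivial undertaking the paper deliberately avoids). Note also that the divergence-freeness of $B$ with respect to $\mu_\gamma$ is never needed globally in the paper; only $\delta_{\mu_{L,\gamma}}B_L=0$ from Proposition \ref{divergence_free} enters, and only at the level of each torus.
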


\begin{proof}
~
\vspace{5mm}
\paragraph{\bf (i) Existence} 
Consider $U_k^{L,R}$ as a stochastic process with law on $C(\R; H^\beta_{loc}(\R^2))$. From Proposition \ref{convergence}, we know that $\mu_{L,\gamma}^{R}$ is a weakly convergent sequence of probability measures in $H^\beta_{loc}(\R^2)$. Therefore, by Skorohod's theorem there exists a probability space $(\tilde\Omega, \mathcal{\tilde F}, \tilde P)$ and two stochastic processes $\tilde U^{L,R}, \tilde U$ with laws respectively $\mu_{L,\gamma}^{R}, \mu_{\gamma}$, such that $\tilde U^{L,R}(t, \tilde w)$ converges to $ \tilde U(t, \tilde w)$ $\tilde P$- a.e. $\tilde w$ and for all $t \in \R$, when $L,R$ tend to infinity. In particular, it follows 
\begin{equation}\label{stat_inv_int}
\int f(\tilde U(t, \tilde\w))d\tilde P(\tilde\w)=\int f(\varphi)d\mu_{\gamma}(\varphi), \qquad \forall f\in C_b.
\end{equation} 
Moreover for all $L\in \N^*$ and for $\beta<-1$, we have
$$
\int \sum_k \left( \frac{2\pi k}{L} \right)^{2\beta} |\tilde U^L_k(t,\tilde w)|^2 d\tilde P(\tilde w) = \int \|\varphi^L\|_{H^\beta}^2 d\mu_{L,\gamma}(\varphi^L) \leq C <+\infty,
$$
this implies that $\tilde U(t,\tilde w)$ takes values in $H^\beta_{loc}(\R^2)$ for all $t\in \R$.

Now, to prove the following:
\begin{equation}\label{stat_ex_int}
\tilde U(t,\tilde \w)=\tilde U(0, \tilde\w) + \int_0^t B(\tilde U(s, \tilde\w))ds, \qquad \tilde P-a.e.\,\tilde\w,~ \forall ~t\in \R, 
\end{equation}
 we have to check that 
$$
\mathbb{E}_{\tilde P} d_{\beta,2} (  \int_0^t [B_k^{L,R}(\tilde U^{L,R}(s,\tilde\w))- B_k(\tilde U(s,\tilde\w))]ds; 0 ) $$
tends to $0$ when $L$ and $R$ tend to infinity. We have
\begin{align*}
\mathbb{E}_{\tilde P} d_{\beta,2} (  \int_0^t &[B_k^{L,R}(\tilde U^{L,R}(s,\tilde\w)) - B_k(\tilde U(s,\tilde\w))]ds; 0 ) \leq \\
& \mathbb{E}_{\tilde P} d_{\beta,2} (  \int_0^t [B_k^{L,R}(\tilde U^{L,R}(s,\tilde\w))- B_k(\tilde U^{L,R}(s,\tilde\w))]ds; 0 ) \\
& +  \mathbb{E}_{\tilde P} d_{\beta,2} (  \int_0^t [B_k(\tilde U^{L,R}(s,\tilde\w))- B_k(\tilde U(s,\tilde\w))]ds; 0 ) .
\end{align*}
The first term is bounded by
$$
\sum_{L\in\N^*} 2^{-L}C(L) \sum_k \left( \frac{2\pi k}{L} \right)^{2\beta} \int_0^t \mathbb{E}_{\tilde P} |B_k^{L,R}(\tilde U^{L,R}(s,\tilde\w))- B_k(\tilde U^{L,R}(s,\tilde\w))|^2 ds.
$$
It converges to $0$ when $L$ and $R$ tend to infinity by the invariance of the measure  and the $L^2$ convergence of $B_k^{L,R}$ towards $B_k$.
 Analogously the second term is bounded by
$$
\sum_{L\in\N^*} 2^{-L} C(L) \sum_k  \left( \frac{2\pi k}{L} \right)^{2\beta} \int_0^t \mathbb{E}_{\tilde P} |B_k(\tilde U^{L,R}(s,\tilde\w))- B_k(\tilde U(s,\tilde\w))|^2 ds.
$$
This term also converges to $0$ when $L$ and $R$ go to infinity by the equi-integrability of the functions $B_k(\tilde U^{L,R}(s,\tilde\w))$ and the convergence of the flows $\tilde U^{L,R}(s,\tilde\w)$ towards $\tilde U(s,\tilde\w)$ (similar to the arguments used in \cite{AC}).

Up to now we only proved the intermediary existence result \eqref{stat_ex_int}; we will finally get statement \ref{one} after the proof of uniqueness, see equation \eqref{stat_ex}.


\vspace{5mm}
\paragraph{\bf (ii) Uniqueness}
Every time that we consider the vorticity equation projected on the torus a uniqueness argument, similar to the one presented in \cite{AF}, applies. Uniqueness of the velocity flow follows from uniqueness of its law seen as the solution of the corresponding continuity equation; as in the classical DiPerna Lions approach for vector fields with low regularity, see \cite{dPL}. 
We use the machinery from \cite{AF}, namely Theorem 4.7, to say that the law of $\tilde U^L$ is a Dirac measure on the trajectories, the proof of this relies on the fact that the solution of the continuity equation is unique. 

Now, let $k_t^L$ be the Radon-Nikodym density of $d(\tilde U^L(t, \cdot)* \tilde P)$ with respect to $d\mu_{L,\gamma}$ at time $t\in\R$. We have that $k_t^L$ is a bounded weak solution of
\begin{align}\label{cont_eq_infty}
\frac{d}{dt} k_t^L(\phi) & = - <B_L(\phi), \nabla k_t^L (\phi) >_{\beta}, \qquad \mbox{ in } \R^+\times H^\beta; \\
k_0^L(\phi)& =1; \nonumber
\end{align}
that is
\begin{equation}\label{sol_cont_eq}
\int_0^\infty \int_{H^\beta} k_t^L(\phi) \left(-\partial_t f+ <B_L(\phi), \nabla f  >_{\beta} \right)d\mu_{L,\gamma}(\phi)dt =  \int_{H^\beta} f(0, \phi)d\mu_{L,\gamma}(\phi), \quad \forall f\in \mathcal{D}_t,
\end{equation}
where $\mathcal{D}_t$ denotes the space of differentiable functions on $\R^+\times H^\beta$ depending on a finite number of coordinates. Clearly $k_t^L\equiv 1$ is a solution of \eqref{cont_eq_infty}, below we show that it is unique.
We remark that for each Galerkin approximation of $B_L$, $B^{n}_L$ with $n\in \N$, uniqueness holds since $B^{n}_L$ is quadratic. Thus $k_t^{L,n}\equiv 1$ is the unique solution of the truncated continuity equation.
Now, let $\tilde k_t^L$ be another weak solution of \eqref{cont_eq_infty}, that is $\tilde k_t^L$ verifies \eqref{sol_cont_eq}.
We have
\begin{align*}
&\int_0^\infty \int_{H^\beta} \tilde k_t^L(\phi) \left(-\partial_t f+ <B_L(\phi), \nabla f  >_{\beta} \right)d\mu_{L,\gamma}(\phi)dt -  \int_{H^\beta} f(0, \phi)d\mu_{L,\gamma}(\phi) \\
&= \int_0^\infty \int_{H^\beta} \tilde k_t^L(\phi^n) \left(-\partial_t f+ <B_L(\phi^n), \nabla f  >_{\beta} \right)d\mu_{L,\gamma}^n(\phi^n)dt \int_{H^\beta}d\mu_{L,\gamma}^{n, \perp}(\phi^{n,\perp})\\
& -  \int_{H^\beta} f(0, \phi^n)d\mu_{L,\gamma}^n(\phi^n)\int_{H^\beta}d\mu_{L,\gamma}^{n, \perp}(\phi^{n,\perp})\\
&= \int_0^\infty \int_{H^\beta}  \left(-\partial_t f+ <B_L(\phi), \nabla f  >_{\beta} \right)d\mu_{L,\gamma}(\phi)dt -  \int_{H^\beta} f(0, \phi)d\mu_{L,\gamma}(\phi) 
\end{align*}
where we used that $\tilde  k_t^L(\phi^n)= \tilde k_t^{L,n}(\phi)=1$ and $B_L(\phi^n)=B^{n}_L(\phi)$. From the arbitrariness of $f \in \mathcal{D}_t$ we conclude that $k_t^L\equiv 1$ is the unique solution of \eqref{cont_eq_infty} in $\R^+\times H^\beta$. To get the negative values of $t$ we repeat the same reasoning for the map $t\mapsto k_{-t}^L$. 

Therefore, by Theorem 4.7 from \cite{AF}, $\tilde U^L(t, \tilde \w)$ is unique in the sense that any other $B_L$-flow, $U'^L(t,\tilde\w)$, is such that 
$$
\tilde U^L(\cdot,\tilde \w)=U'^L(\cdot,\tilde \w), \qquad \tilde P- a.e. \, \tilde \w\in \tilde\Omega.
$$
Moreover, on each compact phase space, the law of the Euler flow is a Dirac measure on the trajectories, implying that the solution is in fact deterministic. That is, we have that
$$
U^L(t, \varphi^L)= \varphi^L+ \int_0^t B_L(U^L(s, \varphi^L))ds, \qquad \mu_{L,\gamma}-a.e. ~\varphi^L, \forall ~t\in \R
$$
is the unique $B_L$-flow. Now, if $M\in \N^*$ is such that $M>L$, from $\left. \varphi^L \equiv \varphi^M \right|_{[0,L]^2}$ and $\left. B_L(t, \varphi^L) \equiv B_M(t, \varphi^M\right|_{[0,L]^2})$ we get
$$
U^L(t, \varphi^L) \equiv U^M(t, \left. \varphi^M)\right|_{[0,L]^2}, \qquad \forall t\in \R.
$$
Therefore uniqueness holds for the velocity flow $\tilde U(t, \tilde w)$ defined in the previous theorem which is in fact deterministic; we denote it by
\begin{equation}\label{stat_ex}
U(t, \varphi)=\varphi+ \int_0^t B(U(s, \varphi))ds, \qquad \mu_{\gamma}-a.e. ~ \varphi \in H^\beta_{loc}(\R^2) , ~\forall t\in \R.
\end{equation}

\vspace{5mm}
\paragraph{\bf (iii) Invariance}
The measure $\mu_{\gamma}$ is invariant under the deterministic flow $U(t, \varphi)$ defined for $t\in \R$ and $\varphi \in H^\beta_{loc}(\R^2)$. Indeed for all $ f \in C_b$ we have 
$$
 \int f d\mu_{\gamma}= \lim_L \int f d\mu_{L,\gamma}=  \lim_L \int f( U(t, \varphi))d\mu_{L,\gamma}=  \int f( U(t, \varphi)) d\mu_{\gamma}, \qquad \forall t \in \R.
$$


It only remains us to prove that for every fixed initial data $\phi \in H^\beta_{loc}(\R^2)$, $U(\cdot, \phi)$ is a continuous function of time in $H^\beta_{loc}(\R^2)$. Let $t > t' \in \R$ be such that $|t-t'|<\delta$ for some $\delta>0$, from the invariance property and Proposition \ref{p_reg} we have
\begin{align*}
\mathbb{E}_{\mathbb{P}}  \sup_{|t-t'|<\delta} d_{\beta,2}(U(t, \phi); U(t',\phi))  & 
= \mathbb{E}_{\mathbb{P}}  \sup_{|t-t'|<\delta} \sum_L 2^{-L}C(L) \frac{\| \int_{t'}^t B(U(s,\phi))\|_{H^\beta([0,L]^2)}}{1+ \| \int_{t'}^t B(U(s,\phi))\|_{H^\beta([0,L]^2)}} \\
&\leq \delta \sum_L 2^{-L}C(L) \mathbb{E}_{\mathbb{P}} \|B (U(s, \phi))\|_{H^\beta([0,L]^2)} \\
&= \delta \sum_L 2^{-L}C(L) \mathbb{E}_{\mathbb{P}} \|B(\phi)\|_{H^\beta([0,L]^2)} \underset{\delta\to 0}\rightarrow 0.
\end{align*}

\end{proof}

\begin{rem}
The proof of the existence of a two-dimensional Euler flow partially relies on the ideas from \cite{AC}, by which it is possible to construct a probabilistic (in the sense of the proof above) flow on the plane. However, the result of Theorem \ref{existence} above is stronger, since, as a by product of the proof of uniqueness, we get that this probabilistic flow is in fact determinist. 
\end{rem}

\begin{rem}
With respect to \cite{AF}, we are in a very particular case: $B_L$ is autonomous, quadratic and divergence-free. The latter hypothesis permit to show uniqueness in a simpler way that the one presented in \cite{AF}, in particular we do not need any additional assumption on the gradient of $B_L$. Moreover, the vector field being autonomous, we are not in the case of Depauw's counterexample about non-uniqueness of weak solutions for the continuity equations, see \cite{Cri} for more details.
\end{rem}


\subsection{Continuity}

The flow is continuous from $H^\beta_{loc}(\R^2)$ to $H^\beta_{loc}(\R^2)$ on the support of $\mu_{\gamma}$ for all $ t\in \R$. We write
\begin{align*}
\mathbb{E}_{\mu_{\gamma}} d_{\beta,2}(U(t, \varphi_1); U(t, \varphi_2)) & \leq  \mathbb{E}_{\mu_{\gamma}} d_{\beta,2}(U(t, \varphi_1); U^n(t, \varphi_1))  \\
&+ \mathbb{E}_{\mu_{\gamma}} d_{\beta,2}(U^n(t, \varphi_1); U^n(t, \varphi_2))  \\
&+ \mathbb{E}_{\mu_{\gamma}} d_{\beta,2}(U^n(t, \varphi_2); U(t, \varphi_2))
\end{align*}
where $U^n$ denotes a finite dimensional approximation of $U$. On one hand there exist $n_1, n_2\in\N$ such that for every $n\geq \max\{n_1, n_2\}$ 
$$
\mathbb{E}_{\mu_{\gamma}} d_{\beta,2}(U(t, \varphi_1); U^n(t, \varphi_1)) \leq \frac{\varepsilon}{3}\quad  \mbox{ and } \quad \mathbb{E}_{\mu_{\gamma}} d_{\beta,2}(U^n(t, \varphi_2); U(t, \varphi_2))  \leq \frac{\varepsilon}{3}.
$$
On the other, for a fixed $n\geq \max\{n_1, n_2\}$, we have that $U^n$ is continuous; indeed it is the flow for the quadratic vector field $B^n$. Thus there exists a positive $\delta$ such that for 
$d_{\beta,2}(\varphi_1; \varphi_2)\leq \delta$ we have 
$$
\mathbb{E}_{\mu_{\gamma}} d_{\beta,2}(U^n(t, \varphi_1); U^n(t, \varphi_2))\leq \frac{\varepsilon}{3}.
$$

\subsection*{Acknowledgements}\noindent
The authors were partially supported by Portuguese FCT grant PTDC/MAT-STA/0975/2014. The second author was also funded by the LisMath fellowship PD/BD/52641/2014, FCT, Portugal.


\bibliography{biblio}
\bibliographystyle{siam}

\end{document}